\newtheorem{theorem}{Theorem}[section]
\newtheorem{lemma}[theorem]{Lemma}
\newtheorem{proposition}[theorem]{Proposition}
\newtheorem{corollary}[theorem]{Corollary}
\theoremstyle{definition}
\newtheorem{definition}[theorem]{Definition}
\newtheorem{remark}[theorem]{Remark}
\newtheorem{question}[theorem]{Question}
\newcommand{\R}{\mathbb{R}}
\newcommand{\T}{\mathbb{T}}
\newcommand{\N}{\mathbb{N}}
\newcommand{\uo}{\ensuremath{\mathsf{uo}}}
\newcommand{\Buo}{\ensuremath{\mathsf{Buo}}}
\renewcommand{\le}{\leqslant}
\renewcommand{\ge}{\geqslant}
\title[Bourgain-uo sequential completeness in vector lattices]{Bourgain--uo sequential completeness in vector lattices}
\author[T.~Kania]{Tomasz Kania}
\address[T.~Kania]{Mathematical Institute\\Czech Academy of Sciences\\\v Zitn\'a 25 \\115 67 Praha 1\\Czech Republic  and  Institute of Mathematics and Computer Science\\ Jagiellonian University\\ {\L}ojasiewicza 6, 30-348 Krak\'{o}w, Poland
}
\email{kania@math.cas.cz, tomasz.marcin.kania@gmail.com}
\author[J.~Swaczyna]{Jaros{\l}aw Swaczyna}
\address[J.~Swaczyna]{Institute of Mathematics, {\L}\'od\'z University of Technology, Aleje Politechniki 8,  93-590 {\L}\'od\'z, Poland}
\email{jaroslaw.swaczyna@p.lodz.pl}
\date{\today}
\thanks{Institute of Mathematics, Czech Academy of Sciences; RVO: 67985840. The second-named author acknowledges with thanks funding received from NCN Sonata-Bis 13 (2023/50/E/ST1/00067). }
\dedicatory{In memoriam: Jan Mikusi\'{n}ski (1913--1987)}
\subjclass[2020]{%
Primary 46B42; Secondary 46B45, 46E05, 46E30%
}
\begin{document}
\begin{abstract}
We revisit Bourgain's 1981 counterexample to the sequential completeness of the
`pointwise plus domination' convergence on $\ell_1$ from the perspective of
vector lattices. In this setting, we show that for sequences the associated
notion of Bourgain--uo convergence coincides with ordinary order convergence.
Motivated by Bourgain's construction, we introduce a strengthened,
subsequence-invariant notion of Cauchy sequence: a sequence $(x_n)$ in a vector
lattice $E$ is called \Buo-\emph{Cauchy} if for every strictly increasing
sequence $(n_k)$ the differences $x_{n_{k+1}}-x_{n_k}$ converge to $0$ in order
in $E$.

We first show that sequential \Buo-completeness forces $\sigma$-order
completeness. Thus every non-$\sigma$-order complete vector lattice fails
sequential \Buo-completeness. In particular, free Banach lattices
$\mathrm{FBL}(E)$ are not sequentially \Buo-complete whenever $\dim E>1$.

On the positive side, we prove that the classical sequence lattices $c_0$ and
$\ell_\infty$ are sequentially \Buo-complete: every \Buo-Cauchy sequence
converges in order, and hence in the \Buo\ sense.

Finally, we obtain a sharp metric characterisation for bounded Lipschitz
function lattices: the vector lattice $\mathrm{Lip}_b(X)$ of bounded Lipschitz
functions on a metric space $(X,d)$ is sequentially \Buo-complete if and only
if $X$ is uniformly discrete.
\end{abstract}


\maketitle
\section{Introduction}

In \cite{Bourgain81}, J.~Bourgain studied on $\ell_1$ the convergence obtained by
combining coordinatewise convergence with domination by a single element of
$\ell_1$ and proved that this convergence is not sequentially complete.  This
answered a question of Jan Mikusi\'{n}ski and suggests a natural order-theoretic
problem in the setting of vector lattices.

If $E$ is a vector lattice, the natural analogue of coordinatewise or
almost-everywhere convergence is unbounded order ($\uo$) convergence, while the
domination requirement becomes order boundedness by a single positive vector.
This leads to the notion of Bourgain--uo convergence introduced below.  For
sequences, however, the convergence notion itself turns out not to be new:
Bourgain--uo convergence coincides with ordinary order convergence
(Proposition~\ref{prop:Buo=order}).  The real novelty lies in the associated
Cauchy condition.

Motivated by Bourgain's finite-block argument, we therefore introduce a
subsequence-invariant Cauchy notion: a sequence $(x_n)$ in a vector lattice $E$
is called \Buo-Cauchy if for every strictly increasing sequence $(n_k)$ the
differences $x_{n_{k+1}}-x_{n_k}$ converge to $0$ in order.  This condition is
weaker than the usual order-Cauchy condition, but it is still strong enough to
force substantial order-theoretic structure.

The main results of the paper are these.
\begin{enumerate}[label=(\roman*)]
\item If every \Buo-Cauchy sequence in $E$ converges in order, then $E$ is
$\sigma$-order complete.  In particular, $\mathrm{FBL}(E)$ fails sequential
\Buo-completeness whenever $\dim E>1$.
\item The classical sequence lattices $c_0$ and $\ell_\infty$ are sequentially
\Buo-complete.
\item For a metric space $(X,d)$, the lattice $\mathrm{Lip}_b(X)$ is
sequentially \Buo-complete if and only if $X$ is uniformly discrete.
\end{enumerate}

The paper is organised as follows.  Section~\ref{sec:preliminaries} fixes
notation, records our conventions on order convergence, and compares
\Buo-Cauchy sequences with the usual order-Cauchy notion.
Section~\ref{sec:general-obstruction} contains the general obstruction coming
from $\sigma$-order completeness and treats the sequence spaces $c_0$ and
$\ell_\infty$.  Section~\ref{sec:LipX} deals with bounded Lipschitz lattices,
and Section~\ref{sec:open-problems} collects several open questions.

\section{Preliminaries}\label{sec:preliminaries}

Throughout, $E$ denotes a vector lattice.

\begin{definition}[Order convergence]\label{def:order_convergence}
Let $E$ be a vector lattice and $(x_\alpha)$ a net in $E$.
We write $x_\alpha\xrightarrow{\mathrm{o}}x$ if there exists a net $(u_\beta)$ in $E_+$
such that $u_\beta\downarrow 0$ and for every $\beta$ there exists $\alpha_0$ with
\[
|x_\alpha-x|\le u_\beta \qquad(\alpha\ge \alpha_0).
\]
Here $u_\beta\downarrow 0$ means that $(u_\beta)$ is decreasing and $\inf_\beta u_\beta=0$ in $E$.
For sequences $(x_n)$ we use the same definition (with $\alpha=n\in\N$).
\end{definition}

\begin{remark}\label{rem:sigma_order_convention}
Some authors use a sequential variant of order convergence for sequences, often
denoted $x_n\xrightarrow{\sigma\text{-}\mathrm{o}}x$, in which the dominating
family can be chosen to be a sequence.  We will not need a detailed comparison
of these two notions below.  They agree in many standard settings, for example
in almost $\sigma$-order complete vector lattices; see
\cite{AbramovichSirotkin2005,TaylorThesis2018} for background.  Throughout,
``order convergence'' always means Definition~\ref{def:order_convergence}.
\end{remark}

\begin{definition}[Unbounded order convergence]\label{def:uo_convergence}
Let $E$ be a vector lattice and $(x_\alpha)$ a net in $E$.  We write
$x_\alpha\xrightarrow{\uo}x$ if
\[
|x_\alpha-x|\wedge u\xrightarrow{\mathrm{o}}0
\qquad\text{for every }u\in E_+.
\]
For sequences we use the same definition.
\end{definition}

\begin{remark}\label{rem:uo_not_pointwise}
In Dedekind complete function lattices such as $\R^X$ or $L_0(\mu)$,
$\uo$-convergence agrees with pointwise (respectively a.e.) convergence; see
\cite[{\S}3]{GaoXanthos2014}.  In proper sublattices such as $C(X)$ or
$\mathrm{Lip}_b(X)$, $\uo$-convergence---and even order convergence---can be
strictly weaker than pointwise convergence.  For example, in $C([0,1])$ the
functions $f_n(t):=(1-nt)_+$ satisfy $f_n\downarrow 0$ in order although
$f_n(0)=1$ for all $n$.
\end{remark}

\begin{definition}[Bourgain--uo convergence]
A sequence $(x_n)\subset E$ \emph{\Buo-converges} to $x\in E$ if
$x_n\xrightarrow{\uo}x$ and there exists $y\in E_+$ such that
$|x_n|\le y$ for all $n$.
\end{definition}

The next proposition shows that, for sequences, Bourgain--uo convergence is
exactly order convergence.  We are grateful to Vladimir Troitsky and Mitchell
Taylor for this observation.

\begin{proposition}\label{prop:Buo=order}
For a sequence $(x_n)$ in a vector lattice $E$ and $x\in E$ the following are equivalent:
\begin{enumerate}[label=(\roman*)]
\item $x_n\xrightarrow{\Buo} x$;
\item $x_n\xrightarrow{\mathrm{o}} x$.
\end{enumerate}
\end{proposition}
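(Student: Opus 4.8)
The plan is to prove the two implications separately and straight from the definitions; the implication (ii)$\Rightarrow$(i) is routine bookkeeping, whereas (i)$\Rightarrow$(ii) rests on a single nontrivial observation. I will use the standard description of order convergence of a sequence: $a_n\xrightarrow{\mathrm{o}}0$ exactly when there is a net $(w_\beta)_\beta$ in $E$ with $w_\beta\downarrow 0$ such that, for each $\beta$, one has $|a_n|\le w_\beta$ for all sufficiently large $n$. (Recall that $w_\beta\downarrow 0$ forces every $w_\beta\ge 0$, since $0=\inf_\beta w_\beta$ is attained cofinally below each fixed term.)

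For (ii)$\Rightarrow$(i), assume $x_n\xrightarrow{\mathrm{o}}x$ with witnessing net $(w_\beta)\downarrow 0$. Given $u\in E_+$, one has $|x_n-x|\wedge u\le w_\beta\wedge u$ for $n$ large; the net $(w_\beta\wedge u)_\beta$ is positive and decreasing, and any lower bound of it is also a lower bound of $(w_\beta)$, hence $\le 0$, so $w_\beta\wedge u\downarrow 0$ and therefore $|x_n-x|\wedge u\xrightarrow{\mathrm{o}}0$. This establishes $x_n\xrightarrow{\uo}x$. For order boundedness, fix one index $\beta_0$ together with a threshold $N$, so that $|x_n|\le|x|+w_{\beta_0}$ for $n\ge N$; then $y:=\bigl(\bigvee_{k<N}|x_k|\bigr)\vee\bigl(|x|+w_{\beta_0}\bigr)$ dominates every $|x_n|$. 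Hence $x_n\xrightarrow{\Buo}x$.

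For (i)$\Rightarrow$(ii), assume $x_n\xrightarrow{\uo}x$ and $|x_n|\le y\in E_+$ for all $n$. The crux is to show that the $\uo$-limit is captured by the same bound, i.e.\ $|x|\le y$; once this is known, $|x_n-x|\le|x_n|+|x|\le 2y$, so $|x_n-x|=|x_n-x|\wedge 2y\xrightarrow{\mathrm{o}}0$ by applying $\uo$-convergence with $u=2y$, which is precisely $x_n\xrightarrow{\mathrm{o}}x$. To prove $|x|\le y$, set $z:=(|x|-y)^+\in E_+$. The reverse triangle inequality together with $|x_n|\le y$ yields $|x|-y\le|x|-|x_n|\le|x_n-x|$, hence $z\le|x_n-x|$ and thus $|x_n-x|\wedge z=z$ for every $n$. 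Feeding $u=z$ into the $\uo$-hypothesis shows that the constant sequence with value $z$ converges to $0$ in order; but then $z$ is a lower bound of the associated net, whose infimum is $0$, so $z\le 0$ and therefore $z=0$, giving $|x|\le y$.

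The only genuine obstacle is the inequality $|x|\le y$ — the fact that the $\uo$-limit of an order-bounded sequence obeys the same order bound — and the trick that resolves it is the self-referential truncation at $z=(|x|-y)^+$, which converts the $\uo$-hypothesis into the assertion that a constant sequence is order null. I expect everything else to be mechanical. Note also that the argument uses neither order completeness nor the Archimedean property, so Proposition~\ref{prop:Buo=order} holds for an arbitrary vector lattice $E$.
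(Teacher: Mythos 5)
Your proof is correct and takes essentially the same approach as the paper: the key step in (i)$\Rightarrow$(ii) is the identical truncation trick with $z=(|x|-y)^+$ to show that the $\uo$-limit obeys the bound $|x|\le y$, after which one applies $\uo$-convergence at a multiple of $y$, and (ii)$\Rightarrow$(i) is the same routine verification of order boundedness plus domination by the witnessing net. The only cosmetic differences are your net-based formulation of order convergence (the paper uses a decreasing sequence with thresholds) and your choice $u=2y$ in place of the paper's estimate $|x_n-x|\wedge 2y\le 2\bigl(|x_n-x|\wedge y\bigr)$.
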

\begin{proof}
(i)$\Rightarrow$(ii):
Assume $x_n\xrightarrow{\uo}x$ and $|x_n|\le y$ for some $y\in E_+$.
Set $w:=(|x|-y)^+\in E_+$. Since $|x_n|\le y$ we have
\[
|x|-y \le |x|-|x_n|,
\]
hence
\[
w=(|x|-y)^+ \le (|x|-|x_n|)^+.
\]
By the reverse triangle inequality, $|x|-|x_n|\le |x-x_n|$, so $(|x|-|x_n|)^+\le |x-x_n|$.
Therefore $w\le |x-x_n|$, and consequently
\[
|x-x_n|\wedge w = w \qquad(n\in\N).
\]
Taking $u=w$ in the definition of $\uo$-convergence yields
$w=|x-x_n|\wedge w\xrightarrow{\mathrm{o}}0$, hence $w=0$.
Thus $|x|\le y$, and for all $n$,
\[
|x_n-x|\le |x_n|+|x|\le 2y.
\]
Since $\uo$-convergence gives $|x_n-x|\wedge (2y)\xrightarrow{\mathrm{o}}0$ and
$|x_n-x|=|x_n-x|\wedge (2y)$, we get $|x_n-x|\xrightarrow{\mathrm{o}}0$,
i.e.\ $x_n\xrightarrow{\mathrm{o}}x$.

(ii)$\Rightarrow$(i):
Assume $x_n\xrightarrow{\mathrm{o}}x$ in the sense of Definition~\ref{def:order_convergence}.
Then there exist a net $(z_\beta)\downarrow 0$ in $E_+$ and integers $N_\beta$ such that
$|x_n-x|\le z_\beta$ whenever $n\ge N_\beta$.
Fix $\beta_0$. For $n\ge N_{\beta_0}$ we have
\[
|x_n|\le |x|+|x_n-x|\le |x|+z_{\beta_0}.
\]
Define
\[
y:=\bigl(|x|+z_{\beta_0}\bigr)\vee |x_1|\vee\cdots\vee |x_{N_{\beta_0}-1}|\in E_+.
\]
Then $|x_n|\le y$ for all $n$.
Moreover, for every $u\in E_+$ and $n\ge N_\beta$,
\[
|x_n-x|\wedge u \le z_\beta\wedge u,
\]
and since $(z_\beta\wedge u)\downarrow 0$ we obtain $|x_n-x|\wedge u\xrightarrow{\mathrm{o}}0$.
Hence $x_n\xrightarrow{\uo}x$, and together with $|x_n|\le y$ this implies $x_n\xrightarrow{\Buo}x$.
\end{proof}

\begin{definition}[Order-Cauchy]\label{def:order_cauchy}
A sequence $(x_n)$ in a vector lattice $E$ is called \emph{order Cauchy} if the net
\[
(x_n-x_m)_{(n,m)\in\N^2}
\]
converges to $0$ in order (where $\N^2$ is directed by the coordinatewise order).
Equivalently, there exists a net $(u_\beta)\downarrow 0$ in $E_+$ such that for every $\beta$
there is $N$ with
\[
|x_n-x_m|\le u_\beta\qquad(n,m\ge N).
\]
\end{definition}

\begin{definition}\label{def:buo_cauchy}
A sequence $(x_n)$ in a vector lattice $E$ is called \Buo-\emph{Cauchy} if for
every strictly increasing sequence $(n_k)$ the differences
$x_{n_{k+1}}-x_{n_k}$ converge to $0$ in order in $E$.
\end{definition}

\begin{lemma}\label{lem:order_cauchy_implies_Buo_cauchy}
Every order-Cauchy sequence is \Buo-Cauchy.
\end{lemma}

\begin{proof}
Let $(x_n)$ be order Cauchy and let $(n_k)$ be strictly increasing.
Put $d_k:=x_{n_{k+1}}-x_{n_k}$.
By Definition~\ref{def:order_cauchy} there exists a net $(u_\beta)\downarrow 0$ in $E_+$ such that
for each $\beta$ there is $N$ with $|x_n-x_m|\le u_\beta$ whenever $n,m\ge N$.
Fix $\beta$ and choose $k_0$ with $n_{k_0}\ge N$. Then $|d_k|\le u_\beta$ for all $k\ge k_0$,
so $d_k\xrightarrow{\mathrm{o}}0$. Since $(n_k)$ was arbitrary, $(x_n)$ is \Buo-Cauchy.
\end{proof}

\begin{remark}\label{rem:BuoCauchy_vs_orderCauchy}
Lemma~\ref{lem:order_cauchy_implies_Buo_cauchy} shows that \Buo-Cauchyness is weaker
than the usual order-Cauchy condition.  The point of \Buo-Cauchyness is its subsequence
invariance and its connection with Bourgain's finite-block phenomenon \cite{Bourgain81}.
\end{remark}

\begin{lemma}\label{lem:disjoint_order_null}
Let $E$ be a vector lattice. If $(x_n)$ is a disjoint sequence in $E$ and $\{x_n:n\in\N\}$ is order bounded,
then $x_n\xrightarrow{\mathrm{o}}0$.
\end{lemma}

\begin{proof}
Disjoint sequences are $\uo$-null; see, for instance, \cite[{\S}3]{GaoXanthos2014}.
If $|x_n|\le u$ for some $u\in E_+$, then $|x_n|=|x_n|\wedge u$ for all $n$, so $\uo$-nullness yields
$|x_n|\xrightarrow{\mathrm{o}}0$, i.e.\ $x_n\xrightarrow{\mathrm{o}}0$.
\end{proof}

\begin{lemma}\label{lem:monotone_order_limit}
Let $E$ be a vector lattice and let $(x_n)$ be an increasing sequence in $E$.
If $x_n\xrightarrow{\mathrm{o}}x$, then $x=\sup_n x_n$ in $E$.
\end{lemma}

\begin{proof}
Let $(u_\beta)\downarrow 0$ witness $x_n\xrightarrow{\mathrm{o}}x$.
First we show that $x$ is an upper bound of $\{x_n\}$.
If $x_{n_0}>x$ for some $n_0$, then $w:=(x_{n_0}-x)^+>0$ and $x_n-x\ge w$ for all $n\ge n_0$.
Choose $\beta$ such that $w\not\le u_\beta$ (possible since $u_\beta\downarrow 0$), contradicting
$x_n-x\le u_\beta$ eventually.

Now let $y$ be any upper bound of $\{x_n\}$. Then $0\le (x-y)^+\le x-x_n$ for all $n$.
If $(x-y)^+>0$, the same argument contradicts $x-x_n\xrightarrow{\mathrm{o}}0$.
Hence $(x-y)^+=0$, i.e.\ $x\le y$. Thus $x$ is the least upper bound.
\end{proof}

The next observation explains how sequential \Buo-completeness fits into the
usual hierarchy of order-theoretic completeness notions.

\begin{proposition}\label{prop:Buo_complete_implies_sigma_order_complete}
If every \Buo-Cauchy sequence in a vector lattice $E$ converges in order, then
$E$ is $\sigma$-order complete.
\end{proposition}

\begin{proof}
By Lemma~\ref{lem:order_cauchy_implies_Buo_cauchy}, every order-Cauchy
sequence in $E$ is \Buo-Cauchy and hence order convergent. Thus $E$ is
order-Cauchy complete for sequences. It is standard that a vector lattice is
$\sigma$-order complete if and only if every order-Cauchy sequence is order
convergent; see \cite[Proposition~18.46 and Remark~18.47]{TaylorThesis2018}.
Therefore $E$ is $\sigma$-order complete.
\end{proof}

\begin{corollary}\label{cor:non_sigma_order_complete_not_Buo_complete}
If a vector lattice $E$ is not $\sigma$-order complete, then $E$ is not
sequentially \Buo-complete.
\end{corollary}

\begin{proof}
Immediate from Proposition~\ref{prop:Buo_complete_implies_sigma_order_complete}.
\end{proof}

\begin{corollary}\label{cor:FBL_not_Buo_complete}
Let $E$ be a Banach space with $\dim E>1$. Then $\mathrm{FBL}(E)$ is not
sequentially \Buo-complete.
\end{corollary}

\begin{proof}
By \cite[Proposition~2.11(3)]{OikhbergTaylorTradaceteTroitsky2024},
$\mathrm{FBL}(E)$ is not $\sigma$-order complete whenever $\dim E>1$.
Apply Corollary~\ref{cor:non_sigma_order_complete_not_Buo_complete}.
\end{proof}

\section{A general obstruction and sequence spaces}\label{sec:general-obstruction}

We begin with the general obstruction supplied by
Proposition~\ref{prop:Buo_complete_implies_sigma_order_complete} and then turn
to the sequence spaces $c_0$ and $\ell_\infty$.

\subsection{$c_0$ and $\ell_\infty$}\label{sec:c0-linfty}

In this subsection we study \Buo-Cauchy sequences in the classical sequence
lattices $c_0$ and $\ell_\infty$.  Recall that in these spaces
$\uo$-convergence is just coordinatewise convergence of sequences (see, for
instance, \cite[{\S}3]{GaoXanthos2014}).

Let $(x_n)\subset c_0$ (or $(x_n)\subset\ell_\infty$) be \Buo-Cauchy.  Fix a
coordinate $j\in\N$ and consider the scalar sequence $(x_n(j))_n$ in $\R$.
The definition of \Buo-Cauchy in the one-dimensional lattice $\R$ implies
that $(x_n(j))_n$ is a Cauchy sequence in the usual sense (indeed, for
every subsequence $(n_k)$ the differences $x_{n_{k+1}}(j)-x_{n_k}(j)$
converge to $0$ in $\R$). Hence $(x_n(j))_n$ converges in $\R$ for every
$j$, and so $(x_n)$ admits a pointwise limit $x\in\R^\N$.

The next lemma shows that this limit already belongs to the ambient Banach
lattice.

\begin{lemma}\label{lem:pointwise_limit_in_c0}
Let $(x_n)\subset c_0$ (respectively $(x_n)\subset\ell_\infty$) be
\Buo-Cauchy and let $x\in\R^\N$ denote its pointwise limit. Then
$x\in c_0$ (respectively $x\in\ell_\infty$).
\end{lemma}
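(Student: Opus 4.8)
The plan is to handle the two lattices together, deriving from the \Buo-Cauchy hypothesis a boundedness principle (which settles $\ell_\infty$) and then, for $c_0$, a vanishing-at-infinity statement. The key elementary observation, valid in any vector lattice $E$, is that a sequence $(d_k)$ with $d_k\xrightarrow{\mathrm{o}}0$ is dominated by a \emph{single} positive element: choosing a decreasing sequence $(z_m)\downarrow 0$ and indices $N_m$ with $|d_k|\le z_m$ for $k\ge N_m$, the element $u:=z_1\vee\bigvee_{k<N_1}|d_k|$ satisfies $|d_k|\le u$ for all $k$. Applying this with $d_k:=x_{n_{k+1}}-x_{n_k}$, every strictly increasing sequence $(n_k)$ produces some $u\in E_+$ bounding all its consecutive differences; in $\ell_\infty$ this says $\sup_k\|x_{n_{k+1}}-x_{n_k}\|_\infty<\infty$, while in $c_0$ it says the differences $x_{n_{k+1}}-x_{n_k}$ have uniformly small tails (being all dominated by one element of $c_0$).

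For $\ell_\infty$ I would argue by contradiction. If $\sup_n\|x_n\|_\infty=\infty$ then, since this survives discarding finitely many terms, one may recursively choose $n_1<n_2<\cdots$ with $\|x_{n_1}\|_\infty\ge1$ and $\|x_{n_{k+1}}\|_\infty\ge 2\|x_{n_k}\|_\infty$; then $\|x_{n_{k+1}}-x_{n_k}\|_\infty\ge\|x_{n_{k+1}}\|_\infty-\|x_{n_k}\|_\infty\ge\tfrac12\|x_{n_{k+1}}\|_\infty\ge 2^{k-1}\to\infty$, contradicting the boundedness principle above. Hence $M:=\sup_n\|x_n\|_\infty<\infty$, and passing to the pointwise limit, $|x(j)|=\lim_n|x_n(j)|\le M$ for all $j$, so $x\in\ell_\infty$.

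For $c_0$, observe first that a \Buo-Cauchy sequence in $c_0$ is a fortiori \Buo-Cauchy in $\ell_\infty$ (the larger lattice supplies more dominating elements), so $x\in\ell_\infty$ by the previous paragraph; it remains to prove $x(j)\to0$. Assuming otherwise, fix $\delta>0$ and coordinates $j_1<j_2<\cdots$ with $|x(j_i)|\ge 2\delta$; since $x_n(j_i)\to x(j_i)$, for each $i$ there is $M_i$ with $|x_n(j_i)|\ge\delta$ whenever $n\ge M_i$. I would then build $(n_k)$ and $(i_k)$ by an interleaved recursion: with $n_k\ge M_{i_k}$ in hand, use $x_{n_k}\in c_0$ to pick a tail index $J_k$ with $|x_{n_k}(j)|<\delta/2$ for $j\ge J_k$, then choose $i_{k+1}>i_k$ with $j_{i_{k+1}}\ge J_k$, and finally $n_{k+1}>n_k$ with $n_{k+1}\ge M_{i_{k+1}}$. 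Along the resulting strictly increasing $(n_k)$,
\[
|x_{n_{k+1}}(j_{i_{k+1}})-x_{n_k}(j_{i_{k+1}})|\ \ge\ |x_{n_{k+1}}(j_{i_{k+1}})|-|x_{n_k}(j_{i_{k+1}})|\ >\ \delta-\tfrac{\delta}{2}\ =\ \tfrac{\delta}{2},
\]
while $j_{i_{k+1}}\to\infty$; this contradicts the existence of a single $u\in(c_0)_+$ dominating all differences $x_{n_{k+1}}-x_{n_k}$, since such a $u$ would satisfy $u(j_{i_{k+1}})\ge\delta/2$ at infinitely many coordinates escaping to infinity. Therefore $x(j)\to0$ and $x\in c_0$.

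The step I expect to require care is the $c_0$ recursion: the order of the three choices is essential — the tail cut-off $J_k$ must be read off from the \emph{current} iterate $x_{n_k}$, the coordinate $j_{i_{k+1}}$ must be pushed beyond it, and only \emph{afterwards} may $n_{k+1}$ be taken past $M_{i_{k+1}}$ — so that each difference is genuinely large at a coordinate running off to infinity. The $\ell_\infty$ part is routine once the single-dominating-element remark is in place; the only mild subtlety there is that the dominating element depends on the chosen subsequence, which is harmless since only one subsequence is used at a time.
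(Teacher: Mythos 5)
Your proof is correct and follows essentially the same route as the paper's: you use the fact that an order-null sequence is dominated by a single positive element, derive the $\ell_\infty$ case from norm-unbounded consecutive differences exactly as the paper does, and for $c_0$ you reach the same contradiction (consecutive differences uniformly large at coordinates escaping to infinity, which no single element of $(c_0)_+$ can dominate), merely replacing the paper's finite-set bookkeeping with the $E_n$, $F_n$ sets by your thresholds $M_i$ and tail cut-offs $J_k$. The only loose point is inessential: the detour through $\ell_\infty$ in the $c_0$ case is not needed (a null scalar sequence is automatically bounded), and your parenthetical justification that a \Buo-Cauchy sequence in $c_0$ is \Buo-Cauchy in $\ell_\infty$ "because the larger lattice supplies more dominating elements" misstates the reason — the Cauchy condition involves no domination; the transfer holds because $c_0$ is an ideal, hence a regular sublattice, of $\ell_\infty$, so order convergence to zero passes to the larger lattice.
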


\begin{proof}
We treat the cases of $c_0$ and $\ell_\infty$ separately.

\emph{Case $c_0$.}
Suppose, for a contradiction, that $x\notin c_0$. Then there exist
$\varepsilon>0$ and a strictly increasing sequence of indices
$(\ell_i)_{i\in\N}$ such that $|x(\ell_i)|>3\varepsilon$ for all $i$. Set
\[
A\ :=\ \{\ell_i:i\in\N\}\subset\N.
\]
For $n\in\N$ define
\[
E_n\ :=\ \bigl\{k\in A: |x_m(k)|\ge\varepsilon \text{ for some } m\le n\bigr\},
\]
and
\[
F_n\ :=\ \bigl\{k\in A: \sup_{m>n}|x_m(k)|\le 3\varepsilon\bigr\}.
\]

Since each $x_m\in c_0$, the set $\{k\in A:|x_m(k)|\ge\varepsilon\}$ is
finite, so $E_n$ is a finite union of finite sets and therefore finite.
Moreover, if $k\in F_n$, then $y(k):=\sup_{m\in\N}|x_m(k)|>3\varepsilon$
(because $k\in A$), and the condition $\sup_{m>n}|x_m(k)|\le 3\varepsilon$
forces the existence of some $m\le n$ with $|x_m(k)|>3\varepsilon$. Hence
\[
F_n\ \subset\ \bigcup_{m\le n}\bigl\{k\in A:|x_m(k)|>3\varepsilon\bigr\},
\]
so $F_n$ is finite as well. Thus for every $n$ the set
\[
A\setminus(E_n\cup F_n)
\]
is infinite.

We now construct strictly increasing sequences $(n_i)_{i\in\N}$ and
$(k_i)_{i\in\N}$ with $k_i\in A$ such that
\begin{equation}\label{eq:c0-big-jump}
|x_{n_{i+1}}(k_i)-x_{n_i}(k_i)|>\varepsilon\qquad(i\in\N).
\end{equation}
Set $n_1:=1$.  Since $A\setminus(E_{n_1}\cup F_{n_1})$ is infinite, we can
choose $k_1\in A\setminus(E_{n_1}\cup F_{n_1})$.  Then $k_1\notin E_{n_1}$
implies $|x_m(k_1)|<\varepsilon$ for all $m\le n_1$, so in particular
$|x_{n_1}(k_1)|<\varepsilon$.  As $k_1\notin F_{n_1}$, we have
\[
\sup_{m>n_1}|x_m(k_1)|>3\varepsilon,
\]
so there exists $n_2>n_1$ with $|x_{n_2}(k_1)|>3\varepsilon$.

Proceeding inductively, suppose $n_1<\cdots<n_i$ and $k_1<\cdots<k_i$ have
been chosen with $k_j\in A$ for $1\le j\le i$.  Since
$A\setminus(E_{n_i}\cup F_{n_i})$ is infinite and $\{k_1,\dots,k_i\}$ is
finite, we can choose
\[
k_{i+1}\in A\setminus(E_{n_i}\cup F_{n_i})
\quad\text{with}\quad k_{i+1}>k_i.
\]
Then $k_{i+1}\notin E_{n_i}$ implies
$|x_m(k_{i+1})|<\varepsilon$ for all $m\le n_i$, while
$k_{i+1}\notin F_{n_i}$ implies
\[
\sup_{m>n_i}|x_m(k_{i+1})|>3\varepsilon,
\]
so we may choose $n_{i+1}>n_i$ such that
$|x_{n_{i+1}}(k_{i+1})|>3\varepsilon$.  Hence
\[
|x_{n_{i+1}}(k_{i+1})-x_{n_i}(k_{i+1})|
\ge |x_{n_{i+1}}(k_{i+1})|-|x_{n_i}(k_{i+1})|
>3\varepsilon-\varepsilon = 2\varepsilon>\varepsilon,
\]
and \eqref{eq:c0-big-jump} holds (after relabelling indices if necessary).

Thus we have constructed strictly increasing sequences $(n_i)$ and $(k_i)$
with $k_i\in A$ such that \eqref{eq:c0-big-jump} holds for all $i$.  Let
$d_i:=x_{n_{i+1}}-x_{n_i}\in c_0$.  Then for each $i$ we have
\[
|d_i(k_i)|=|x_{n_{i+1}}(k_i)-x_{n_i}(k_i)|>\varepsilon.
\]

We claim that $(d_i)$ is not order bounded in $c_0$.  Indeed, suppose there
exists $0\le z\in c_0$ with $|d_i|\le z$ for all $i$.  Then
$z(k_i)\ge |d_i(k_i)|>\varepsilon$ for all $i$, which is impossible because
$z\in c_0$ can have $|z(k)|>\varepsilon$ only for finitely many $k$.  Thus
$(d_i)$ is not order bounded and hence cannot converge to $0$ in order.

However, the sequence $(x_n)$ is \Buo-Cauchy, so for every strictly
increasing sequence $(n_i)$ the differences $x_{n_{i+1}}-x_{n_i}$
must converge to $0$ in order, which contradicts the conclusion above.
Therefore our assumption $x\notin c_0$ was false, and so $x\in c_0$.

\emph{Case $\ell_\infty$.}
Suppose $(x_n)\subset\ell_\infty$ is \Buo-Cauchy and let $x$ be its
pointwise limit as above.  We first show that $(x_n)$ is norm bounded in
$\ell_\infty$.

Assume, towards a contradiction, that
\[
\sup_{n\in\N}\|x_n\|_\infty=\infty.
\]
We may then choose inductively a strictly increasing sequence
$(n_k)_{k\in\N}$ of integers such that
\[
\|x_{n_{k+1}}\|_\infty \ \ge\ \|x_{n_k}\|_\infty + (k+1)
\qquad(k\in\N).
\]
Set $d_k:=x_{n_{k+1}}-x_{n_k}\in\ell_\infty$.  By the triangle inequality,
\[
\|d_k\|_\infty
=\|x_{n_{k+1}}-x_{n_k}\|_\infty
\ge \|x_{n_{k+1}}\|_\infty-\|x_{n_k}\|_\infty
\ge k+1,
\]
so the sequence $(d_k)$ is not norm bounded.

On the other hand, since $(x_n)$ is \Buo-Cauchy and $(n_k)$ is a strictly
increasing sequence, the difference sequence $(d_k)$ must converge to $0$
in order in $\ell_\infty$.  Every order-null sequence is order bounded: if
$|d_k|\le z_m$ for all $k\ge N_m$ with a decreasing sequence
$(z_m)\downarrow0$, then
\[
y:=|d_1|\vee\cdots\vee|d_{N_1-1}|\vee z_1\in(\ell_\infty)_+
\]
satisfies $|d_k|\le y$ for all $k$.  In particular the set
$\{d_k:k\in\N\}$ is contained in the order interval $[-y,y]$, and every
order interval in a Banach lattice is norm bounded.  Hence $(d_k)$ is norm
bounded, contradicting $\|d_k\|_\infty\ge k+1$.

Therefore $(x_n)$ is norm bounded, and we may set
\[
M:=\sup_{n\in\N}\|x_n\|_\infty<\infty.
\]
For each fixed $k\in\N$ we then have
\[
|x(k)|=\lim_{n\to\infty}|x_n(k)|
\le \sup_{n\in\N}\|x_n\|_\infty=M,
\]
so $\|x\|_\infty\le M$ and hence $x\in\ell_\infty$.
\end{proof}

\begin{remark}
In the $\ell_\infty$ case, the above argument also shows that any
\Buo-Cauchy sequence $(x_n)\subset\ell_\infty$ is norm bounded:
$\sup_n\|x_n\|_\infty<\infty$.
\end{remark}

\begin{theorem}\label{thm:c0-complete}
The Banach lattices $c_0$ and $\ell_\infty$ are sequentially \Buo-complete.
\end{theorem}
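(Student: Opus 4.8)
The plan is to reduce the statement, via Proposition~\ref{prop:Buo=order}, to a question about order convergence and then to build an explicit order-null dominating sequence. Let $E\in\{c_0,\ell_\infty\}$ and let $(x_n)\subset E$ be \Buo-Cauchy. As noted before Lemma~\ref{lem:pointwise_limit_in_c0}, $(x_n)$ converges coordinatewise to some $x\in\R^\N$, and Lemma~\ref{lem:pointwise_limit_in_c0} gives $x\in E$. The translated sequence $(x_n-x)$ is again \Buo-Cauchy (the consecutive differences along any subsequence are unchanged), has coordinatewise limit $0$, and lies in $E$, so we may assume $x=0$. By Proposition~\ref{prop:Buo=order} it then suffices to prove $x_n\xrightarrow{\mathrm{o}}0$ in $E$, i.e.\ to produce a sequence $(z_k)\downarrow0$ in $E$ with $|x_n|\le z_k$ for all $n\ge k$; the natural candidate is $z_k:=\sup_{n\ge k}|x_n|$, so everything comes down to showing that this supremum exists in $E$ and decreases to $0$ there.

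For $\ell_\infty$ this is quick. By the remark following Lemma~\ref{lem:pointwise_limit_in_c0}, $(x_n)$ is norm bounded, hence $z_k=\sup_{n\ge k}|x_n|$ exists in the Dedekind-complete lattice $\ell_\infty$ and is bounded by $\sup_n\|x_n\|_\infty$. The sequence $(z_k)$ is decreasing, its coordinatewise infimum equals $\limsup_n|x_n(j)|=0$ for every $j$, and in $\ell_\infty$ the order infimum of a decreasing sequence is the coordinatewise one; thus $z_k\downarrow0$. Since $|x_n|\le z_k$ for $n\ge k$, we get $x_n\xrightarrow{\mathrm{o}}0$ and the $\ell_\infty$ case is complete.

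The real content is the $c_0$ case, where the obstruction is that $\sup_{n\ge k}|x_n|$ need not lie in $c_0$ unless one first knows that $\|x_n\|_\infty\to0$. Proving this norm convergence is the heart of the argument, and I expect it to be the only delicate step. My approach: suppose $\|x_n\|_\infty\not\to0$ and pass to a subsequence $(x_{n_k})$ with $\|x_{n_k}\|_\infty>\varepsilon$ for some fixed $\varepsilon>0$. Applying \Buo-Cauchyness of $(x_n)$ to the strictly increasing sequence $(n_{k_j})$ obtained from any further subsequence, the consecutive differences converge to $0$ in order, hence---since $c_0$ has order-continuous norm---in norm. An elementary metric observation now applies: a sequence all of whose subsequences have consecutive differences tending to $0$ in norm is norm-Cauchy, since otherwise a zig-zag subsequence $x_{a_1},x_{b_1},x_{a_2},x_{b_2},\dots$ with $a_i<b_i<a_{i+1}$ and $\|x_{a_i}-x_{b_i}\|_\infty>\delta$ would have consecutive differences bounded below. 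Hence $(x_{n_k})$ is norm-Cauchy, so norm-convergent, necessarily to its coordinatewise limit $0$, contradicting $\|x_{n_k}\|_\infty>\varepsilon$. (Alternatively one can mimic the proof of Lemma~\ref{lem:pointwise_limit_in_c0} directly, extracting strictly increasing $(m_l)$ and coordinates $(c_l)$ with $|x_{m_{l+1}}(c_l)-x_{m_l}(c_l)|$ bounded away from $0$ on pairwise distinct $c_l$, so that $(x_{m_{l+1}}-x_{m_l})$ is not order bounded in $c_0$ and hence not order-null, again contradicting \Buo-Cauchyness.)

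Once $\|x_n\|_\infty\to0$, finishing is routine. For each $\eta>0$ only finitely many $x_n$ have norm $\ge\eta$, and each $x_n\in c_0$ has $|x_n(j)|<\eta$ for all large $j$, so $\sup_n|x_n|\in c_0$. Hence $z_k:=\sup_{n\ge k}|x_n|$ satisfies $0\le z_k\le\sup_n|x_n|\in c_0$, so $z_k\in c_0$; the $(z_k)$ are decreasing with coordinatewise infimum $\limsup_n|x_n|=0$, so $z_k\downarrow0$ in $c_0$; and $|x_n|\le z_k$ for $n\ge k$. Therefore $x_n\xrightarrow{\mathrm{o}}0$, and by Proposition~\ref{prop:Buo=order} also $x_n\xrightarrow{\Buo}0$. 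Since $(x_n)$ was an arbitrary \Buo-Cauchy sequence, $c_0$ is sequentially \Buo-complete, which together with the $\ell_\infty$ case above proves the theorem.
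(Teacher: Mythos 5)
Your proof is correct, but in the decisive case ($c_0$) it follows a genuinely different route from the paper. The paper never passes through norm convergence: it shows directly that $y:=\sup_n|x_n|$ lies in $c_0$ by a combinatorial extraction (the sets $E_n$, $F_n$, as in Lemma~\ref{lem:pointwise_limit_in_c0}), producing, if $y\notin c_0$, a subsequence whose consecutive differences are not even order bounded, contradicting \Buo-Cauchyness; \Buo-convergence then follows straight from the definition (uo-convergence plus the single order bound $y$). You instead first prove $\|x_n\|_\infty\to0$ (after the harmless translation to $x=0$) by combining two observations: order convergence of sequences implies norm convergence because $c_0$ has ($\sigma$-)order continuous norm, and a sequence all of whose subsequences have norm-null consecutive differences is norm-Cauchy (the zig-zag argument); you then build the explicit majorants $z_k=\sup_{n\ge k}|x_n|\in c_0$. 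This is shorter and cleanly isolates what is special about $c_0$, and it yields norm convergence of \Buo-Cauchy sequences as a by-product; its drawback is that it leans on order continuity of the norm, so it cannot serve as a template for lattices lacking that property --- indeed for $\ell_\infty$ your argument reverts to essentially the paper's (norm boundedness from the remark after Lemma~\ref{lem:pointwise_limit_in_c0}, coordinatewise suprema, Dedekind completeness), differing only in that you exhibit $z_k\downarrow0$ and invoke Proposition~\ref{prop:Buo=order} where the paper quotes the definition of \Buo-convergence directly. Two cosmetic points: in the zig-zag step only infinitely many (not all) consecutive differences are bounded below by $\delta$, which is of course enough; and the verification that $\sup_n|x_n|\in c_0$ when $\|x_n\|_\infty\to0$ needs the usual $\eta/2$ adjustment, since a coordinate of the supremum can reach level $\eta$ without any single $x_n$ doing so.
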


\begin{proof}
We again treat the two cases separately.

\emph{Case $c_0$.}
Let $(x_n)\subset c_0$ be \Buo-Cauchy.  By the discussion preceding
Lemma~\ref{lem:pointwise_limit_in_c0} there is a pointwise limit
$x\in\R^\N$, and by Lemma~\ref{lem:pointwise_limit_in_c0} we have $x\in c_0$.

Since $c_0$ is a vector lattice of real-valued sequences, $\uo$-convergence
coincides with coordinatewise convergence.  Hence $x_n\xrightarrow{\uo}x$
in $c_0$.  To conclude that $x_n\xrightarrow{\Buo}x$ it suffices, by the
definition of \Buo-convergence, to show that $(x_n)$ is order bounded by a
single element of $c_0$.

Define
\[
y(k)\ :=\ \sup_{n\in\N}|x_n(k)|,\qquad k\in\N.
\]
Clearly $y\in\R^\N$ and $|x_n|\le y$ pointwise for all $n$.  We claim that
$y\in c_0$.  Suppose not.  Then there exist $\varepsilon>0$ and an infinite
subset $A\subset\N$ such that
\[
y(k)>3\varepsilon\qquad\text{for all }k\in A.
\]

For each $n\in\N$ define
\[
E_n\ :=\ \bigl\{k\in A: |x_m(k)|\ge\varepsilon \text{ for some } m\le n\bigr\},
\qquad
F_n\ :=\ \bigl\{k\in A: \sup_{m>n}|x_m(k)|\le 3\varepsilon\bigr\}.
\]
As in the proof of Lemma~\ref{lem:pointwise_limit_in_c0}, both $E_n$ and
$F_n$ are finite for every $n$, so $A\setminus(E_n\cup F_n)$ is infinite.

We now construct strictly increasing sequences $(n_i)_{i\in\N}$ and $(k_i)_{i\in\N}$
with $k_i\in A$ such that
\begin{equation}\label{eq:c0-big-jump-2}
|x_{n_{i+1}}(k_i)-x_{n_i}(k_i)|>\varepsilon\qquad(i\in\N).
\end{equation}

Set $n_1:=1$. Choose $k_1\in A\setminus(E_{n_1}\cup F_{n_1})$.
Then $|x_{n_1}(k_1)|<\varepsilon$ and, since $k_1\notin F_{n_1}$,
there exists $n_2>n_1$ with $|x_{n_2}(k_1)|>3\varepsilon$, so
\eqref{eq:c0-big-jump-2} holds for $i=1$.

Assume $n_1<\cdots<n_i$ and $k_1<\cdots<k_i$ have been chosen and
$k_i\in A\setminus(E_{n_i}\cup F_{n_i})$.
Since $k_i\notin F_{n_i}$, there exists $n_{i+1}>n_i$ with $|x_{n_{i+1}}(k_i)|>3\varepsilon$.
As $k_i\notin E_{n_i}$ we have $|x_{n_i}(k_i)|<\varepsilon$, hence
\[
|x_{n_{i+1}}(k_i)-x_{n_i}(k_i)|
\ge |x_{n_{i+1}}(k_i)|-|x_{n_i}(k_i)|
>3\varepsilon-\varepsilon=2\varepsilon>\varepsilon,
\]
which gives \eqref{eq:c0-big-jump-2}.

Finally, since $A\setminus(E_{n_{i+1}}\cup F_{n_{i+1}})$ is infinite, we may pick
$k_{i+1}\in A\setminus(E_{n_{i+1}}\cup F_{n_{i+1}})$ with $k_{i+1}>k_i$.
This completes the construction.

Set $d_i:=x_{n_{i+1}}-x_{n_i}\in c_0$.  Then
\[
|d_i(k_i)|=|x_{n_{i+1}}(k_i)-x_{n_i}(k_i)|>\varepsilon\qquad(i\in\N).
\]
If there existed $0\le z\in c_0$ with $|d_i|\le z$ for all $i$, then
$z(k_i)\ge |d_i(k_i)|>\varepsilon$ for all $i$, contradicting the fact that
$z\in c_0$ has only finitely many coordinates exceeding $\varepsilon$ in
absolute value.  Thus the sequence $(d_i)$ is not order bounded in $c_0$,
and hence cannot converge to $0$ in order, which contradicts the
\Buo-Cauchy property of $(x_n)$ applied to the subsequence $(x_{n_i})_i$.

Therefore our assumption $y\notin c_0$ was false, and we must have
$y\in c_0$.  Since $|x_n|\le y$ for all $n$ and $x_n\xrightarrow{\uo}x$ in
$c_0$, the definition of \Buo-convergence yields $x_n\xrightarrow{\Buo}x$.
Thus $c_0$ is sequentially \Buo-complete.

\medskip

\emph{Case $\ell_\infty$.}
Let $(x_n)\subset\ell_\infty$ be \Buo-Cauchy and let $x\in\ell_\infty$ be
its pointwise limit, as provided by Lemma~\ref{lem:pointwise_limit_in_c0}.
By the same lemma and its proof, $(x_n)$ is norm bounded:
\[
M:=\sup_{n\in\N}\|x_n\|_\infty<\infty.
\]
Since $\ell_\infty$ is a vector lattice of real-valued sequences,
coordinatewise convergence coincides with $\uo$-convergence, so
$x_n\xrightarrow{\uo}x$ in $\ell_\infty$.

Define
\[
y(k)\ :=\ \sup_{n\in\N}|x_n(k)|,\qquad k\in\N.
\]
Then $|x_n|\le y$ pointwise for all $n$, and
\[
\|y\|_\infty
=\sup_{k\in\N}y(k)
=\sup_{k\in\N}\sup_{n\in\N}|x_n(k)|
=\sup_{n\in\N}\|x_n\|_\infty
\le M<\infty,
\]
so $y\in(\ell_\infty)_+$.  Thus the sequence $(x_n)$ is order bounded by
$y$ in $\ell_\infty$.  Since we already know $x_n\xrightarrow{\uo}x$, the
definition of \Buo-convergence implies $x_n\xrightarrow{\Buo}x$.

Hence $\ell_\infty$ is sequentially \Buo-complete as well.
\end{proof}

\begin{remark}\label{rem:c0-linfty-open}
Combining Lemma~\ref{lem:pointwise_limit_in_c0} with
Theorem~\ref{thm:c0-complete}, we see that every \Buo-Cauchy sequence in
$c_0$ or in $\ell_\infty$ has a pointwise limit in the same space and in
fact converges to this limit in order (equivalently, in the \Buo\ sense).
In particular, both $c_0$ and $\ell_\infty$ enjoy full sequential
\Buo-completeness.
\end{remark}

Without resolving sequential \Buo-completeness for $\ell_p$-spaces in the reflexive range, we record the following lemma, which may prove useful.

\begin{lemma}\label{lem:pointwise_limit_in_ellp}
Let $1\le p<\infty$. If $(x_n)\subset \ell_p$ is \Buo-Cauchy and $x\in\R^\N$ is its coordinatewise limit, then $x\in \ell_p$.
\end{lemma}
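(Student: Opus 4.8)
The plan is to mimic the $\ell_\infty$ argument from the proof of Lemma~\ref{lem:pointwise_limit_in_c0}, with Fatou's lemma (the lower semicontinuity of the $\ell_p$-norm under coordinatewise convergence) playing the role that the elementary bound $|x(k)|\le M$ played there. The whole statement reduces to showing that a \Buo-Cauchy sequence in $\ell_p$ is norm bounded; once that is in hand, membership $x\in\ell_p$ is immediate.

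First I would record the Banach-lattice fact already exploited for $\ell_\infty$: an order-null sequence $(d_k)$ in $\ell_p$ is order bounded, hence norm bounded. Indeed, if $|d_k|\le z_m$ for all $k\ge N_m$ with $(z_m)\downarrow 0$, then $y:=|d_1|\vee\cdots\vee|d_{N_1-1}|\vee z_1\in(\ell_p)_+$ dominates every $|d_k|$, and $|w|\le y$ forces $\|w\|_p\le\|y\|_p$; thus $\sup_k\|d_k\|_p\le\|y\|_p<\infty$ (the bound may of course depend on the particular subsequence chosen).

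Next I would establish norm boundedness of $(x_n)$ by contradiction. If $\sup_n\|x_n\|_p=\infty$, choose inductively a strictly increasing sequence $(n_k)$ with $\|x_{n_{k+1}}\|_p\ge\|x_{n_k}\|_p+(k+1)$ for all $k$. Setting $d_k:=x_{n_{k+1}}-x_{n_k}$, the triangle inequality gives $\|d_k\|_p\ge\|x_{n_{k+1}}\|_p-\|x_{n_k}\|_p\ge k+1\to\infty$. But $(n_k)$ is strictly increasing, so the \Buo-Cauchy hypothesis forces $d_k\xrightarrow{\mathrm{o}}0$ in $\ell_p$, and by the previous paragraph $(d_k)$ is then norm bounded, a contradiction. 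Hence $M:=\sup_n\|x_n\|_p<\infty$. Finally, since $x_n\to x$ coordinatewise, for every $N\in\N$ we have $\sum_{k=1}^{N}|x(k)|^p=\lim_{n\to\infty}\sum_{k=1}^{N}|x_n(k)|^p\le M^p$; letting $N\to\infty$ yields $\|x\|_p\le M$, so $x\in\ell_p$.

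I do not anticipate a genuine obstacle here. The only place where the full \Buo-Cauchy hypothesis (as opposed to mere order-Cauchyness) is used is the extraction step in the norm-boundedness argument; the remainder is the standard Fatou estimate. What this lemma pointedly does \emph{not} settle — and what constitutes the real difficulty for the reflexive range $1<p<\infty$ — is whether $x_n\xrightarrow{\mathrm{o}}x$, equivalently whether a \Buo-Cauchy sequence can be order bounded by a single element of $\ell_p$; coordinatewise convergence together with norm boundedness is far from sufficient for that conclusion, as the (non-\Buo-Cauchy, but instructive) unit-vector basis $(e_n)$ already shows.
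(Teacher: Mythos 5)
Your proof is correct, but it takes a genuinely different route from the paper's. You first establish that every \Buo-Cauchy sequence in $\ell_p$ is norm bounded, by exactly the argument the paper uses for $\ell_\infty$ in Lemma~\ref{lem:pointwise_limit_in_c0} (extract $(n_k)$ with $\|x_{n_{k+1}}\|_p\ge\|x_{n_k}\|_p+(k+1)$, note that the consecutive differences are order null, hence order bounded, hence norm bounded), and then conclude by the Fatou property of the $\ell_p$-norm under coordinatewise convergence: $\sum_{k\le N}|x(k)|^p=\lim_n\sum_{k\le N}|x_n(k)|^p\le M^p$ for every $N$. The paper instead argues directly from the assumption $x\notin\ell_p$: using the divergence of every tail sum $\sum_{j\ge K}|x(j)|^p$, it runs a gliding-hump construction producing a subsequence $(x_{n_i})$ and pairwise disjoint finite blocks $I_i$ with $\|(x_{n_{i+1}}-x_{n_i})\mathds1_{I_i}\|_p>1$, and then shows the differences cannot be order bounded because any dominating $y\ge 0$ would satisfy $\|y\|_p^p\ge\sum_i\|y\mathds1_{I_i}\|_p^p=\infty$ by disjointness. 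Your argument is shorter and more elementary, and it yields as a by-product the $\ell_p$-analogue of the remark after Lemma~\ref{lem:pointwise_limit_in_c0} (norm boundedness of \Buo-Cauchy sequences); the paper's block construction is closer in spirit to the $c_0$ case and to Bourgain's original example, isolating where the mass of the would-be limit sits, which is arguably more informative for the open problem you correctly flag at the end — namely that the lemma says nothing about order convergence of $(x_n)$ to $x$ in the reflexive range.
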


\begin{proof}
Suppose, for a contradiction, that $x\notin \ell_p$. Then $\sum_{j=1}^\infty |x(j)|^p=\infty$, hence for every $K\in\N$
the tail sum $\sum_{j\ge K}|x(j)|^p$ is infinite.

We construct strictly increasing indices $(n_i)$ and pairwise disjoint finite intervals
\[
I_i=[k_i,\ell_i)\cap\N \qquad (i\in\N)
\]
such that
\begin{equation}\label{eq:ellp-big-block}
\bigl\|(x_{n_{i+1}}-x_{n_i})\,\mathds1_{I_i}\bigr\|_p>1\qquad(i\in\N).
\end{equation}

Set $n_1:=1$ and choose $k_1$ so large that $\|x_{n_1}\mathds1_{[k_1,\infty)}\|_p<\tfrac14$.
Since $\sum_{j\ge k_1}|x(j)|^p=\infty$, choose $\ell_1>k_1$ so that
$\|x\mathds1_{I_1}\|_p>2$ for $I_1=[k_1,\ell_1)$.
Because $I_1$ is finite and $x_n\to x$ coordinatewise, choose $n_2>n_1$ with
$\|(x_{n_2}-x)\mathds1_{I_1}\|_p<\tfrac14$. Then
\[
\|(x_{n_2}-x_{n_1})\mathds1_{I_1}\|_p
\ge \|x\mathds1_{I_1}\|_p-\|(x_{n_2}-x)\mathds1_{I_1}\|_p-\|x_{n_1}\mathds1_{I_1}\|_p
>2-\tfrac14-\tfrac14>1,
\]
so \eqref{eq:ellp-big-block} holds for $i=1$.

Assume $n_i,k_i,\ell_i$ have been chosen. Choose $k_{i+1}>\ell_i$ so large that
$\|x_{n_{i+1}}\mathds1_{[k_{i+1},\infty)}\|_p<\tfrac14$.
Since $\sum_{j\ge k_{i+1}}|x(j)|^p=\infty$, choose $\ell_{i+1}>k_{i+1}$ so that
$\|x\mathds1_{I_{i+1}}\|_p>2$ for $I_{i+1}=[k_{i+1},\ell_{i+1})$.
Finally choose $n_{i+2}>n_{i+1}$ so that $\|(x_{n_{i+2}}-x)\mathds1_{I_{i+1}}\|_p<\tfrac14$.
The same estimate as above gives \eqref{eq:ellp-big-block} for $i+1$.

Now set $d_i:=x_{n_{i+1}}-x_{n_i}$. If $(d_i)$ were order bounded in $\ell_p$, there would exist $0\le y\in\ell_p$
with $|d_i|\le y$ for all $i$, hence $\|y\mathds1_{I_i}\|_p\ge \||d_i|\mathds1_{I_i}\|_p>1$ for all $i$.
Since the intervals $I_i$ are disjoint,
\[
\|y\|_p^p \ \ge\ \sum_{i=1}^\infty \|y\mathds1_{I_i}\|_p^p \ >\ \sum_{i=1}^\infty 1 \ =\ \infty,
\]
contradicting $y\in\ell_p$. Thus $(d_i)$ is not order bounded, so it cannot converge to $0$ in order.

But $(x_n)$ is \Buo-Cauchy, so for the subsequence $(x_{n_i})$ the differences $x_{n_{i+1}}-x_{n_i}=d_i$
must converge to $0$ in order, a contradiction. Therefore $x\in\ell_p$.
\end{proof}

\section{Lipschitz function spaces}\label{sec:LipX}

\begin{definition}\label{def:sep-radius}
Let $(X,d)$ be a metric space. For each $x\in X$, define the \emph{isolation radius}
\[
d(x):=\inf\{d(x,y):y\in X,\ y\neq x\}\in[0,\infty].
\]
The \emph{uniform discreteness constant} of $X$ is
\[
\delta(X):=\inf_{x\in X} d(x)\in[0,\infty].
\]
We say $X$ is \emph{uniformly discrete} if $\delta(X)>0$.

\smallskip
We write $\mathrm{Lip}_b(X)$ for the vector lattice of all \emph{bounded}
Lipschitz functions $f:X\to\R$, ordered pointwise.
\end{definition}

On $\R^X$ (or $L_0(\mu)$) unbounded order convergence agrees with pointwise (resp.\ a.e.) convergence,
but in sublattices such as $\mathrm{Lip}_b(X)$ this identification can fail; see Remark~\ref{rem:uo_not_pointwise}.
In what follows we work with the pointwise order on $\mathrm{Lip}_b(X)$.

\begin{lemma}\label{lem:UC-approx-by-Lip}
Let $(X,d)$ be a metric space and let $g:X\to\R$ be bounded and uniformly continuous.
For $n\in\N$, define the \emph{inf-convolution}
\[
g_n(x):=\inf_{y\in X}\bigl(g(y)+n\,d(x,y)\bigr)\qquad(x\in X).
\]
Then:
\begin{enumerate}[\upshape(i)]
\item each $g_n$ is bounded and $n$-Lipschitz;
\item $g_n\le g$ pointwise;
\item $\|g_n-g\|_\infty\to0$ as $n\to\infty$;
\item $(g_n)$ is increasing: if $m\ge n$, then $g_n\le g_m$ pointwise.
\end{enumerate}
\end{lemma}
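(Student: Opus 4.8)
The plan is to verify the three assertions about the inf-convolution $g_n(x)=\inf_{y\in X}\bigl(g(y)+n\,d(x,y)\bigr)$ in the order stated, relying only on the boundedness and uniform continuity of $g$ and on the triangle inequality for $d$.

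\textbf{Well-definedness and (i).} First I would note that since $g$ is bounded, say $|g|\le M$, the infimum defining $g_n(x)$ is over a set of reals bounded below by $-M$, so $g_n(x)\in[-M,M]$ is well defined and $\|g_n\|_\infty\le M$; in fact taking $y=x$ gives $g_n(x)\le g(x)$, which is already assertion (ii). For the Lipschitz bound, fix $x,x'\in X$. For any $\varepsilon>0$ pick $y$ with $g(y)+n\,d(x',y)\le g_n(x')+\varepsilon$; then by the triangle inequality $g_n(x)\le g(y)+n\,d(x,y)\le g(y)+n\,d(x',y)+n\,d(x,x')\le g_n(x')+\varepsilon+n\,d(x,x')$. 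Letting $\varepsilon\to0$ and swapping the roles of $x$ and $x'$ gives $|g_n(x)-g_n(x')|\le n\,d(x,x')$, so $g_n$ is $n$-Lipschitz. This proves (i).

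\textbf{(iii), the uniform approximation.} This is the main point. Fix $\varepsilon>0$. By uniform continuity choose $\delta>0$ so that $d(x,y)<\delta$ implies $|g(x)-g(y)|<\varepsilon$. Fix $x\in X$ and any $y\in X$. If $d(x,y)<\delta$, then $g(y)+n\,d(x,y)\ge g(y)\ge g(x)-\varepsilon$. If $d(x,y)\ge\delta$, then $g(y)+n\,d(x,y)\ge -M+n\delta\ge g(x)-\varepsilon$ as soon as $n\ge (2M)/\delta$ (since then $n\delta\ge 2M\ge g(x)+M$, using $|g(x)|\le M$). Hence for all $n\ge 2M/\delta$ the quantity $g(y)+n\,d(x,y)$ is bounded below by $g(x)-\varepsilon$ uniformly in $y$, so $g_n(x)\ge g(x)-\varepsilon$; combined with (ii) this gives $0\le g(x)-g_n(x)\le\varepsilon$ for every $x$, i.e.\ $\|g_n-g\|_\infty\le\varepsilon$ once $n\ge 2M/\delta$. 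Since $\varepsilon$ was arbitrary, $\|g_n-g\|_\infty\to0$.

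\textbf{Main obstacle.} The only delicate step is the case split in (iii): one must see that points $y$ far from $x$ cannot bring the infimum below $g(x)-\varepsilon$, and this is exactly where the uniform (rather than merely pointwise) continuity enters, together with the boundedness of $g$, to make the threshold $n\ge 2M/\delta$ independent of $x$. Everything else is a routine application of the triangle inequality.
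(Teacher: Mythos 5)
Your proof is correct and follows essentially the same route as the paper: the same triangle-inequality argument for (i), the same choice $y=x$ for (ii), and for (iii) the same split between points with $d(x,y)<\delta$ (handled by uniform continuity) and points with $d(x,y)\ge\delta$ (handled by boundedness, with the threshold $n\ge 2M/\delta$ independent of $x$). The only cosmetic difference is that the paper packages this via the modulus of continuity $\omega$ and the quantity $\alpha_n=\sup_{t\ge0}(\omega(t)-nt)$, whereas you run the $\varepsilon$--$\delta$ argument directly; the content is identical.
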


\begin{proof}
\emph{(i)} Fix $n\in\N$ and let $x,x'\in X$. For any $y\in X$, the triangle inequality gives
\[
g(y)+n\,d(x,y)\le g(y)+n\,d(x',y)+n\,d(x,x').
\]
Taking the infimum over $y$ on the right-hand side yields $g_n(x)\le g_n(x')+n\,d(x,x')$.
By symmetry, $g_n(x')\le g_n(x)+n\,d(x,x')$, hence $|g_n(x)-g_n(x')|\le n\,d(x,x')$.

For boundedness, taking $y=x$ shows $g_n(x)\le g(x)$, so $\sup_x g_n(x)\le \|g\|_\infty$.
Also, we have $g(y)\ge -\|g\|_\infty$ and $d(x,y)\ge0$, hence $g_n(x)\ge -\|g\|_\infty$.
\smallskip

\emph{(ii)} Taking $y=x$ in the definition of $g_n(x)$ gives
$g_n(x)\le g(x)+n\cdot 0=g(x)$.

\smallskip
\emph{(iii)} Define the modulus of uniform continuity
\[
\omega(t):=\sup\{|g(x)-g(y)|:d(x,y)\le t\}\qquad(t\ge0).
\]
Then $\omega(t)\to0$ as $t\downarrow0$ and $\omega(t)\le 2\|g\|_\infty$ for all $t\ge0$.

For any $x,y\in X$,
\[
g(y)\ge g(x)-|g(x)-g(y)|\ge g(x)-\omega(d(x,y)),
\]
so
\[
g(y)+n\,d(x,y)\ge g(x)-\omega(d(x,y))+n\,d(x,y)
= g(x)-\bigl(\omega(d(x,y))-n\,d(x,y)\bigr).
\]
Taking the infimum over $y\in X$ yields
\[
g_n(x)\ge g(x)-\sup_{t\ge0}\bigl(\omega(t)-nt\bigr).
\]
Set $\alpha_n:=\sup_{t\ge0}(\omega(t)-nt)\ge0$. Together with $g_n\le g$ we get
\[
0\le g(x)-g_n(x)\le \alpha_n\qquad(x\in X),
\]
hence $\|g-g_n\|_\infty\le \alpha_n$.

It remains to show $\alpha_n\to0$. Fix $\varepsilon>0$ and choose $t_0>0$ so that $\omega(t_0)<\varepsilon$.
For $t\in[0,t_0]$ we have $\omega(t)-nt\le \omega(t)\le \omega(t_0)<\varepsilon$.
For $t>t_0$, using $\omega(t)\le 2\|g\|_\infty$ gives
\[
\omega(t)-nt\le 2\|g\|_\infty-nt_0.
\]
If $n>2\|g\|_\infty/t_0$, then $2\|g\|_\infty-nt_0<0<\varepsilon$.
Thus $\omega(t)-nt<\varepsilon$ for all $t\ge0$, so $\alpha_n\le\varepsilon$.
Since $\varepsilon>0$ is arbitrary, $\alpha_n\to0$.
\smallskip

\emph{(iv)} If $m\ge n$, then for every $y\in X$ we have
$g(y)+n\,d(x,y)\le g(y)+m\,d(x,y)$.
Taking the infimum over $y$ yields $g_n(x)\le g_m(x)$.
\end{proof}

\begin{lemma}\label{lem:many-close-pairs}
Let $(X,d)$ be a metric space with $\delta(X)=0$ and $d(x)>0$ for all $x\in X$
(i.e.,\ $X$ is discrete but not uniformly discrete). Then for every $\varepsilon>0$
and every finite set $F\subset X$, there exist distinct points $a,b\in X\setminus F$
with $d(a,b)<\varepsilon$.
\end{lemma}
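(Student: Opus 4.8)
The plan is to play off the two hypotheses against each other: $\delta(X)=0$ produces distinct points at arbitrarily small mutual distance, whereas the positivity of the isolation radii $d(p)$ at the finitely many points $p\in F$ supplies a uniform ``safety radius'' around $F$ inside which no such close pair can occur. If $F=\emptyset$ the claim is immediate, since $\delta(X)=\inf_{x\in X}d(x)=0$ gives some $x\in X$ with $d(x)<\varepsilon$, and then the definition of $d(x)$ as an infimum yields $y\neq x$ with $d(x,y)<\varepsilon$. So I would assume $F=\{p_1,\dots,p_m\}$ is nonempty and set
\[
r:=\tfrac13\min_{1\le i\le m}d(p_i),
\]
which is strictly positive because each $d(p_i)>0$ and $F$ is finite. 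Since $r\le\tfrac13 d(p_i)$, we have $d(p_i)\ge 3r$, so by the definition of the isolation radius $d(p_i,y)\ge 3r$ for every $i$ and every $y\in X$ with $y\neq p_i$.

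Now I would put $\eta:=\min\{\varepsilon,r\}>0$. As $\delta(X)=0<\eta$, there is $x\in X$ with $d(x)<\eta$, and hence (again by the definition of $d(x)$ as an infimum) there is $y\in X$ with $y\neq x$ and $d(x,y)<\eta\le r$. It remains to verify $x\notin F$ and $y\notin F$. If $x=p_i$ for some $i$, then from $y\neq x=p_i$ we would get $d(x,y)=d(p_i,y)\ge 3r>r$, contradicting $d(x,y)<r$; interchanging the roles of $x$ and $y$ rules out $y=p_j$ in the same way. Thus $a:=x$ and $b:=y$ are distinct points of $X\setminus F$ with $d(a,b)<\eta\le\varepsilon$, which is what we want.

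The argument involves no genuine obstacle; the only point needing a little attention is the calibration of the radius $r$, which has to be chosen small enough relative to the finitely many positive numbers $d(p_1),\dots,d(p_m)$ so that any pair of points at mutual distance below $r$ is automatically disjoint from $F$. Once $r$ (and then $\eta$) is fixed in this way, the conclusion is just an unwinding of the definitions of $d(\cdot)$ and $\delta(X)$.
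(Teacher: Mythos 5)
Your proof is correct and follows essentially the same route as the paper: use the finiteness of $F$ and the positivity of the isolation radii to fix a positive safety radius, invoke $\delta(X)=0$ to find a point $x$ with $d(x)$ below that radius and a companion $y\neq x$ at distance below it, and then use the lower bound on $d(p)$ for $p\in F$ to rule out $x,y\in F$. The only differences are cosmetic (constants $\tfrac13$ versus $\tfrac14$, and you extract $y$ directly from $d(x)<\eta$ rather than via $d(a,b)<d(a)+\theta$), so no further comment is needed.
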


\begin{proof}
Fix $\varepsilon>0$ and a finite set $F\subset X$.
If $F\neq\varnothing$, set $\eta:=\min_{x\in F} d(x)>0$; if $F=\varnothing$, set $\eta:=\varepsilon$.
Let
\[
\theta:=\frac14\min\{\varepsilon,\eta\}>0.
\]
Since $\delta(X)=0$, there exists $a\in X$ with $d(a)<\theta$.
Then $a\notin F$, because $d(x)\ge\eta>\theta$ for all $x\in F$.

By the definition of $d(a)$, choose $b\neq a$ such that
\[
d(a,b)<d(a)+\theta<2\theta\le \frac12\min\{\varepsilon,\eta\}.
\]
In particular $d(a,b)<\varepsilon$. We claim that $b\notin F$.
Indeed, if $b\in F$ then
\[
d(b)\le d(b,a)=d(a,b)<\eta/2,
\]
contradicting $d(b)\ge \eta$ by definition of $\eta$.
Thus $a,b\in X\setminus F$ are distinct and satisfy $d(a,b)<\varepsilon$.
\end{proof}

\begin{proposition}\label{prop:Lipb-uniformly-discrete}
Let $(X,d)$ be a metric space and let $\mathrm{Lip}_b(X)$ be the vector lattice of bounded
Lipschitz functions $X\to\R$, ordered pointwise. Then $\mathrm{Lip}_b(X)$ is sequentially
\Buo-complete if and only if $\delta(X)>0$.
\end{proposition}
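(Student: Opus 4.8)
The plan is to prove the two implications separately. For sufficiency, suppose $\delta:=\delta(X)>0$. Any two distinct points of $X$ are then at distance $\ge\delta$, so every bounded $f\colon X\to\R$ is automatically $(2\|f\|_\infty/\delta)$-Lipschitz; hence $\mathrm{Lip}_b(X)=\ell_\infty(X)$ as vector lattices --- the underlying ordered vector space is literally the same, and since \Buo-Cauchyness and \Buo-convergence are purely lattice-theoretic (built from order convergence and order boundedness) it is irrelevant that $\mathrm{Lip}_b(X)$ is usually normed differently. It therefore suffices to record that $\ell_\infty(\Gamma)$ is sequentially \Buo-complete for every index set $\Gamma$, which follows by rerunning the $\ell_\infty$-arguments of Lemma~\ref{lem:pointwise_limit_in_c0} and Theorem~\ref{thm:c0-complete} --- none of which uses countability of the index set --- and applying the result with $\Gamma=X$. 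Concretely: a \Buo-Cauchy sequence is coordinatewise Cauchy hence has a pointwise limit; it is norm bounded, since otherwise a fast enough subsequence produces differences that are order-null yet not norm bounded, contradicting that order intervals in a Banach lattice are bounded; and then $\gamma\mapsto\sup_n|x_n(\gamma)|$ dominates the sequence in $\ell_\infty(\Gamma)$, upgrading coordinatewise ($=\uo$-) convergence to \Buo-convergence.

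For necessity I would, assuming $\delta(X)=0$, exhibit a \Buo-Cauchy sequence in $\mathrm{Lip}_b(X)$ with no \Buo-limit, using the mechanism already present in the proof of Proposition~\ref{prop:CX-metric}. Namely, if $(f_n)\subset\mathrm{Lip}_b(X)$ satisfies $|f_n|\le\mathds1$ (the constant function $1$, which lies in $\mathrm{Lip}_b(X)$) and $f_n\to f_*$ pointwise for some $f_*\in\R^X$, then for every strictly increasing $(n_k)$ the differences $d_k:=f_{n_{k+1}}-f_{n_k}$ obey $|d_k|\le\mathds1$ and $d_k\to0$ pointwise, i.e.\ $d_k\xrightarrow{\uo}0$ ($\uo$-convergence being pointwise in function lattices); hence $d_k\xrightarrow{\Buo}0$ and so $d_k\xrightarrow{\mathrm o}0$ by Proposition~\ref{prop:Buo=order}, which means $(f_n)$ is \Buo-Cauchy. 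If moreover $f_*\notin\mathrm{Lip}_b(X)$, then $(f_n)$ has no \Buo-limit: such a limit would be an order limit by Proposition~\ref{prop:Buo=order}, hence a pointwise limit belonging to $\mathrm{Lip}_b(X)$, contradicting uniqueness of pointwise limits. So the task reduces to producing one such sequence.

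Here I would distinguish two cases. If $X$ has a non-isolated point $x_0$, take $f_n(x):=(1-n\,d(x,x_0))_+$, precisely the function of Proposition~\ref{prop:CX-metric}: it is bounded and $n$-Lipschitz (hence in $\mathrm{Lip}_b(X)$), bounded in modulus by $\mathds1$, and converges pointwise to $\mathds1_{\{x_0\}}$, which is not Lipschitz --- taking $y_j\to x_0$ with $y_j\ne x_0$ gives a unit jump over vanishing distance --- so $\mathds1_{\{x_0\}}\notin\mathrm{Lip}_b(X)$. If instead every point of $X$ is isolated, then $\delta(X)=0$ says $X$ is discrete but not uniformly discrete, so Lemma~\ref{lem:many-close-pairs} lets me build by induction distinct points $a_i,b_i\notin\{a_1,b_1,\dots,a_{i-1},b_{i-1}\}$ with $d(a_i,b_i)<1/i$; setting $A:=\{a_i:i\in\N\}$ and $f_n:=\mathds1_{\{a_1,\dots,a_n\}}$, each $f_n$ has finite support consisting of isolated points, hence is bounded and Lipschitz (with constant $1/\min_{i\le n}d(a_i)$), is $\le\mathds1$ in modulus, and converges pointwise to $\mathds1_A$, which is not Lipschitz because $a_i\in A$, $b_i\notin A$ and $d(a_i,b_i)\to0$. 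In both cases the mechanism of the previous paragraph makes $(f_n)$ a \Buo-Cauchy sequence with no \Buo-limit, so $\mathrm{Lip}_b(X)$ fails sequential \Buo-completeness.

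I expect the main obstacle to be the case split in the necessity direction rather than any individual estimate: when all points of $X$ are isolated the indicator of a single point is itself Lipschitz, so a single bump will not do, and one must approximate a genuinely non-Lipschitz bounded function by indicators of growing finite sets --- Lemma~\ref{lem:many-close-pairs} being exactly the tool that guarantees the limiting set $A$ can be chosen so that $\mathds1_A$ is non-Lipschitz. The non-isolated-point case, by contrast, is the bump construction already used for $C(X)$, and the sufficiency direction is a routine transfer of the $\ell_\infty$ result to an arbitrary index set.
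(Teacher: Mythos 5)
Your sufficiency direction is fine and coincides with the paper's (when $\delta(X)>0$ one has $\mathrm{Lip}_b(X)=\ell_\infty(X)$ and the $\ell_\infty$-argument runs over an arbitrary index set), but the necessity direction has a genuine gap in the non-isolated-point case. The sequence $f_n=\bigl(1-n\,d(\cdot,x_0)\bigr)_+$ is indeed \Buo-Cauchy, but it \emph{does} have a \Buo-limit in $\mathrm{Lip}_b(X)$, namely $0$: the sequence is decreasing, and any $w\in\mathrm{Lip}_b(X)$ with $w\le f_n$ for all $n$ satisfies $w\le0$ off $x_0$, hence $w(x_0)\le0$ by continuity along a sequence $y_j\to x_0$, $y_j\ne x_0$; thus $f_n\downarrow0$ in the lattice, so $f_n\xrightarrow{\mathrm{o}}0$ and, being dominated by $\mathds1$, $f_n\xrightarrow{\Buo}0$ by Proposition~\ref{prop:Buo=order}. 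The step you invoke --- that a \Buo-limit, being an order limit, must agree with the pointwise limit $\mathds1_{\{x_0\}}$ ``by uniqueness of pointwise limits'' --- is precisely what fails when $X$ has non-isolated points: order convergence in $\mathrm{Lip}_b(X)$ does not imply pointwise convergence there, because indicators of single points are not in the lattice and a lattice-decreasing-to-$0$ sequence (such as $(f_n)$ itself) need not tend to $0$ pointwise. So this sequence witnesses nothing, and simply importing the mechanism of Proposition~\ref{prop:CX-metric} does not establish the non-isolated case.

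The paper's necessity proof is built to avoid exactly this: it takes the bounded, uniformly continuous but non-Lipschitz function $g=\sqrt{\mathrm{dist}(\cdot,A)}\wedge1$ and approximates it \emph{uniformly} by the inf-convolutions of Lemma~\ref{lem:UC-approx-by-Lip}. Uniform convergence yields \Buo-Cauchyness with explicit lattice witnesses $\varepsilon_m\mathds1\downarrow0$, and it also pins down any would-be order limit $h$: if $h\ne g$ then $|g-h|\ge c>0$ on some ball $B(x_1,r)$, so the Lipschitz bump $\tfrac{c}{2}\bigl(1-d(\cdot,x_1)/r\bigr)_+$ is a nonzero positive minorant of every witness $z_k$, contradicting $z_k\downarrow0$; pointwise convergence alone cannot force this, uniform convergence can. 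Your second case (all points isolated, $\delta(X)=0$) is salvageable, since there $\mathds1_{\{x\}}\in\mathrm{Lip}_b(X)$ for every $x$, so order convergence \emph{is} pointwise and the limit $\mathds1_A$ genuinely obstructs convergence; but even there the \Buo-Cauchyness needs explicit order bounds (for example suprema of narrow bumps centred at the tail points $a_j$), because $\sup_{k\ge m}|d_k|$ need not be Lipschitz. For the non-isolated case you need the paper's uniform-approximation construction (or something equivalent), not the $C(X)$ bump sequence.
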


\begin{proof}
\noindent For necessity, assume $\delta(X)=0$. We construct a \Buo-Cauchy sequence in
$\mathrm{Lip}_b(X)$ with no order limit.

\smallskip\noindent
We first produce a closed set $A\subset X$ and a sequence $(b_n)\subset X\setminus A$ such that
\[
t_n:=\mathrm{dist}(b_n,A):=\inf_{a\in A}d(b_n,a)\in(0,1)\quad\text{and}\quad t_n\downarrow0.
\]

\smallskip
\emph{Case A: there exists $x_0\in X$ with $d(x_0)=0$.}
Then $x_0$ is non-isolated, so there exist $b_n\neq x_0$ with $d(b_n,x_0)\to0$.
Let $A:=\{x_0\}$. Then $t_n=\mathrm{dist}(b_n,A)=d(b_n,x_0)\downarrow0$; passing to a tail, assume $t_n<1$ for all $n$.

\smallskip
\emph{Case B: $d(x)>0$ for all $x\in X$.}
Then $X$ is discrete, and since $\delta(X)=0$ it is not uniformly discrete.
We build inductively pairs $(a_n,b_n)$ of distinct points such that
\[
d(a_n,b_n)<\frac1n
\quad\text{and}\quad
\{a_1,\dots,a_n\}\cap\{b_1,\dots,b_n\}=\varnothing.
\]
Choose any distinct $a_1,b_1$ with $d(a_1,b_1)<1$.
Assuming $(a_1,b_1),\dots,(a_{n-1},b_{n-1})$ chosen, let
\[
F:=\{a_1,\dots,a_{n-1},b_1,\dots,b_{n-1}\}.
\]
By Lemma~\ref{lem:many-close-pairs}, there exist distinct $a_n,b_n\in X\setminus F$ with $d(a_n,b_n)<1/n$.
Set $A:=\{a_n:n\in\N\}$. Then $b_n\notin A$ and
\[
t_n=\mathrm{dist}(b_n,A)\le d(b_n,a_n)<\frac1n<1,
\]
so $t_n\downarrow0$. Moreover $t_n>0$ since $b_n\notin A$ and $d(b_n)>0$.

\smallskip\noindent
In both cases, define the bounded uniformly continuous function
\[
g(x):=\sqrt{\mathrm{dist}(x,A)}\wedge 1\qquad(x\in X).
\]
(Here $x\mapsto \mathrm{dist}(x,A)$ is $1$-Lipschitz and $t\mapsto \sqrt{t}\wedge 1$
is uniformly continuous on $[0,\infty)$.)

\smallskip\noindent
\emph{$g$ is not Lipschitz.}
For each $n$, choose $a'_n\in A$ such that $d(b_n,a'_n)<2t_n$ (possible by the definition of $t_n$).
Then $g(a'_n)=0$ and $g(b_n)=\sqrt{t_n}$, hence
\[
\frac{|g(b_n)-g(a'_n)|}{d(b_n,a'_n)}
>\frac{\sqrt{t_n}}{2t_n}
=\frac{1}{2\sqrt{t_n}}
\longrightarrow\infty,
\]
so $g\notin\mathrm{Lip}_b(X)$.

\smallskip\noindent
\emph{Approximation by bounded Lipschitz functions.}
By Lemma~\ref{lem:UC-approx-by-Lip}, the inf-convolutions
\[
g_n(x):=\inf_{y\in X}\bigl(g(y)+n\,d(x,y)\bigr)
\]
belong to $\mathrm{Lip}_b(X)$ and satisfy $\|g_n-g\|_\infty\to0$.

\smallskip\noindent
\emph{$(g_n)$ is \Buo-Cauchy.}
Let $(n_k)$ be strictly increasing. Then
\[
\|g_{n_{k+1}}-g_{n_k}\|_\infty
\le \|g_{n_{k+1}}-g\|_\infty+\|g-g_{n_k}\|_\infty
\longrightarrow0.
\]
Define $\varepsilon_m:=\sup_{k\ge m}\|g_{n_{k+1}}-g_{n_k}\|_\infty$. Then $\varepsilon_m\downarrow0$, and for all $k\ge m$,
\[
|g_{n_{k+1}}-g_{n_k}|\le \varepsilon_m\,\mathds1_X,
\]
where $\mathds1_X$ is the constant $1$ function. Since $\varepsilon_m\mathds1_X\downarrow0$ in $\mathrm{Lip}_b(X)$,
it follows that $g_{n_{k+1}}-g_{n_k}\xrightarrow{\mathrm{o}}0$.
Thus $(g_n)$ is \Buo-Cauchy.

\smallskip\noindent
\emph{$(g_n)$ has no order limit in $\mathrm{Lip}_b(X)$.}
Suppose towards a contradiction that $g_n\xrightarrow{\mathrm{o}}h$ in
$\mathrm{Lip}_b(X)$. By Lemma~\ref{lem:UC-approx-by-Lip}\textup{(iv)} the
sequence $(g_n)$ is increasing, hence Lemma~\ref{lem:monotone_order_limit}
yields that $h$ is the supremum of $\{g_n:n\in\N\}$ in
$\mathrm{Lip}_b(X)$.

For each $m\in\N$, choose $N_m\in\N$ such that
$\|g-g_{N_m}\|_\infty<\tfrac1m$, and define
\[
u_m:=g_{N_m}+\frac1m\,\mathds{1}_X\in \mathrm{Lip}_b(X).
\]
Since $g_n\le g$ for all $n$ and $g\le u_m$ pointwise, each $u_m$ is an
upper bound of the set $\{g_n:n\in\N\}$ in $\mathrm{Lip}_b(X)$. By minimality
of the supremum, we therefore have
\[
h\le u_m\qquad(m\in\N).
\]
Fix $x\in X$. Since $g_{N_m}(x)\to g(x)$ and $m^{-1}\to 0$, it follows that
$u_m(x)\to g(x)$, hence $h(x)\le g(x)$. On the other hand, $h$ is itself an
upper bound of $\{g_n:n\in\N\}$, while $g_n(x)\uparrow g(x)$ pointwise, so
$h(x)\ge g(x)$. Thus $h(x)=g(x)$ for all $x\in X$, i.e.\ $h=g$.
This contradicts $g\notin \mathrm{Lip}_b(X)$.
Therefore $\mathrm{Lip}_b(X)$ is not sequentially \Buo-complete when
$\delta(X)=0$.

\medskip
\noindent For sufficiency, assume $\delta:=\delta(X)>0$.
Let $f:X\to\R$ be bounded. For distinct $x,y\in X$ we have $d(x,y)\ge\delta$, hence
\[
\frac{|f(x)-f(y)|}{d(x,y)}\le \frac{2\|f\|_\infty}{\delta}.
\]
Thus every bounded function is Lipschitz, and consequently
\[
\mathrm{Lip}_b(X)=\ell_\infty(X)
\]
as vector lattices (both are all bounded functions on $X$ with the pointwise order).

It therefore suffices to show that $\ell_\infty(X)$ is sequentially \Buo-complete.
Let $(f_n)\subset\ell_\infty(X)$ be \Buo-Cauchy.

\smallskip\noindent
\emph{Pointwise convergence.}
Fix $x\in X$. For any strictly increasing $(n_k)$, the differences
$f_{n_{k+1}}-f_{n_k}$ converge to $0$ in order in $\ell_\infty(X)$, hence in particular
\[
f_{n_{k+1}}(x)-f_{n_k}(x)\to0\quad\text{in }\R.
\]
Thus $(f_n(x))_n$ is a Cauchy sequence in $\R$, hence convergent.
Define $f(x):=\lim_{n\to\infty}f_n(x)$.

\smallskip\noindent
\emph{Uniform boundedness.}
We claim $\sup_n\|f_n\|_\infty<\infty$.
Suppose not. Choose a strictly increasing $(n_k)$ with
\[
\|f_{n_{k+1}}\|_\infty\ge \|f_{n_k}\|_\infty+(k+1)\qquad(k\in\N).
\]
Set $d_k:=f_{n_{k+1}}-f_{n_k}$. Then
\[
\|d_k\|_\infty\ge \|f_{n_{k+1}}\|_\infty-\|f_{n_k}\|_\infty\ge k+1,
\]
so $(d_k)$ is not norm bounded.
However $(d_k)$ converges to $0$ in order by \Buo-Cauchyness, hence it is order bounded:
there exists $0\le y\in\ell_\infty(X)$ with $|d_k|\le y$ for all $k$.
Then $\|d_k\|_\infty\le\|y\|_\infty$ for all $k$, a contradiction.
Thus $M:=\sup_n\|f_n\|_\infty<\infty$, and in particular $|f(x)|\le M$ for all $x$, so $f\in\ell_\infty(X)$.

\smallskip\noindent
\emph{Order convergence.}
For $m\in\N$ define
\[
z_m(x):=\sup_{n\ge m}|f_n(x)-f(x)|,\qquad x\in X.
\]
Then $0\le z_{m+1}\le z_m$ pointwise, and $z_m(x)\downarrow0$ for each $x$ since $f_n(x)\to f(x)$.
Also $z_m(x)\le 2M$ for all $x$, hence $z_m\in\ell_\infty(X)$.
Finally, for every $n\ge m$ we have $|f_n-f|\le z_m$, so $f_n\xrightarrow{\mathrm{o}}f$ in $\ell_\infty(X)$.
By Proposition~\ref{prop:Buo=order}, this is equivalent to $f_n\xrightarrow{\Buo}f$.
Therefore $\ell_\infty(X)$ (and hence $\mathrm{Lip}_b(X)$) is sequentially \Buo-complete.
\end{proof}

\begin{remark}\label{rem:Lipb-characterisation}
Proposition~\ref{prop:Lipb-uniformly-discrete} shows that sequential \Buo-completeness of $\mathrm{Lip}_b(X)$
depends only on the metric geometry of $X$: it holds exactly when $X$ is uniformly discrete.
In particular:
\begin{enumerate}[\upshape(i)]
\item if $X$ has a non-isolated point, then $\delta(X)=0$ and $\mathrm{Lip}_b(X)$ is not sequentially \Buo-complete;
\item if $X$ is discrete but not uniformly discrete (so $d(x)>0$ for all $x$ but $\delta(X)=0$), then $\mathrm{Lip}_b(X)$ is not sequentially \Buo-complete;
\item if $\delta(X)>0$, then $\mathrm{Lip}_b(X)=\ell_\infty(X)$ and $\mathrm{Lip}_b(X)$ is sequentially \Buo-complete.
\end{enumerate}
\end{remark}

The above reasoning was inspired by a characterisation of compact metric spaces provided by \cite{HH}.

\section{Open problems}\label{sec:open-problems}
\begin{question}\label{q:norm-bounded-buo}
Characterise those Banach lattices for which every norm bounded \Buo-Cauchy
sequence is $\uo$-convergent.  How does this property relate to the bounded
uo-completeness studied in \cite{GaoTroitskyXanthosBuoComp}?
\end{question}
\begin{question}\label{q:c0-ellp-strong}
Do \Buo-Cauchy sequences converge in
\begin{itemize}
    \item $\ell_p$ for $1<p<\infty$?
    \item the Morrey spaces $\mathcal M^{p,\kappa}(\T)$ and in Besov/Triebel--Lizorkin spaces $B^s_{p,q}(\T),F^s_{p,q}(\T)$ in the usual parameter ranges?
\end{itemize}
\end{question}

\begin{question}\label{q:MO-scope}
For Musielak--Orlicz spaces, can one characterise those $\Psi$ for which \Buo\ completeness fails? What is the optimal role of $\Delta_2$ and of the underlying measure?
\end{question}

\section*{Acknowledgements}
We are indebted to the Library of the Institute of Mathematics of the Polish Academy of Sciences in Warsaw for providing us a copy of Bourgain's paper \cite{Bourgain81}.

\end{document}